\newtheorem{thm}{Theorem}[section]
\newtheorem{prop}[thm]{Proposition}
\newtheorem{lemma}[thm]{Lemma}
\newtheorem{cor}[thm]{Corollary}
\newtheorem{conj}{Conjecture}
\def\be{\begin{equation}}
\def\ee{\end{equation}}
\def\ba{\begin{array}}
\def\ea{\end{array}}
\def\bee{\begin{eqnarray}}
\def\eee{\end{eqnarray}}
\def\P{{\cal P}}
\DeclareMathOperator{\im}{im}
\renewcommand{\P}{\mathcal{D}}  
\newcommand{\D}{\mathcal{D}}  
\renewcommand{\H}{\mathcal{H}}  
\title{A proof of the Kauffman--Harary Conjecture}
\author{Thomas W.\ Mattman and Pablo Solis}
\address{Department of Mathematics and Statistics,
         California State University, Chico,
         Chico, CA 95929-0525}
\email{TMattman@CSUChico.edu}
\address{Department of Mathematics,
 University of California,
 Berkeley, CA}
\email{pablo@math.berkeley.edu}
\subjclass[2000]{Primary 57M25}
\keywords{Kauffman--Harary Conjecture, Fox coloring, alternating knot}
\begin{document}

\begin{abstract}
We prove the Kauffman--Harary Conjecture,
posed in 1999: given a reduced, alternating diagram $\P$ of a knot with prime determinant $p$, every
non-trivial Fox $p$-coloring of $\P$ will assign different colors to different arcs.
\end{abstract}

\maketitle

\section{Introduction}\label{sec-intro}
In 1999, Louis Kauffman and Frank Harary published a paper
\cite{Kauff} detailing a graph-theoretic approach to the study of
knot theory. In the paper they state a conjecture
(Alternation Conjecture 6.2) that has come to be known as
the Kauffman--Harary Conjecture:
\begin{conj}[Kauffman--Harary Conjecture]\label{k-h conj}
Let $\P$ be a reduced, alternating diagram of the knot $k$ having prime determinant $p$.
Then, every non-trivial $p$-coloring of $\P$ assigns different colors to different arcs.
\end{conj}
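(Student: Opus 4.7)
The plan is to argue by contradiction and exploit the rigidity of $p$-colorings when $p = \det k$ is prime. Suppose $\kappa$ is a non-trivial Fox $p$-coloring of $\P$ that assigns a common color $c$ to two distinct arcs $\alpha \neq \beta$. Because $p$ is prime, the Fox coloring matrix has corank exactly $2$ over $\mathbb{F}_p$, so the space of $p$-colorings of $\P$ modulo the constant (trivial) colorings is one-dimensional over $\mathbb{F}_p$. Replacing $\kappa$ by $\kappa - c$, I may assume $\kappa(\alpha) = \kappa(\beta) = 0$, and set $Z := \kappa^{-1}(0)$; the hypotheses then give $2 \leq |Z| < n$, where $n$ is the number of arcs of $\P$.

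The next step is a local propagation lemma. At every crossing the Fox relation $2a \equiv b + c \pmod{p}$, with $a$ the overarc color and $b,c$ the underarc colors, has two immediate consequences for $Z$: if both underarcs lie in $Z$ then so does the overarc, and if the overarc and one underarc lie in $Z$ then so does the other underarc. Combined with the checkerboard coloring of regions that the alternating and reduced hypotheses furnish, these local constraints should force $Z$ to \emph{insulate} a proper sub-tangle $T$ of $\P$: every arc of $\P$ that crosses the boundary of $T$ lies in $Z$, so that the colored diagram factors as a tangle sum.

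One then invokes the multiplicative behavior of the knot determinant under tangle decomposition to express $\det k$ as a product whose factors are the determinants of the pieces on either side of $Z$. Primality of $p$ forces one factor to be $\pm 1$, so one piece must be a trivial tangle; this either collapses $Z$ to a single arc, contradicting $|Z| \geq 2$, or produces a nugatory crossing, contradicting the assumption that $\P$ is reduced. A straightforward induction on the crossing number, with the small alternating diagrams (trefoil, figure eight, etc.) verified by direct inspection as the base case, should organize the argument cleanly.

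The hardest part will be the middle step: upgrading the purely local Fox-coloring constraints at crossings adjacent to $Z$ into a genuine global tangle decomposition of $\P$. Establishing that $Z$ actually separates the diagram into sub-tangles requires a careful analysis of how the arcs of $Z$ interact with the checkerboard region structure, and this is precisely where the alternating hypothesis must carry the weight of the argument --- in a non-alternating diagram, the $0$-arcs could weave through $\P$ in a way that defeats any factorization of the determinant into sub-tangle determinants, so the proof must extract strong structural information from the fact that overcrossings and undercrossings alternate along each strand.
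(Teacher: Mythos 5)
There is a real gap, and you have in fact pointed at it yourself: the entire weight of the argument rests on the claim that the zero set $Z=\kappa^{-1}(0)$ of a hypothetical color-repeating coloring ``insulates'' a piece of the diagram, and this is only asserted (``should force''). The two local closure rules at a crossing (both underarcs in $Z$ forces the overarc in $Z$; overarc and one underarc in $Z$ forces the other underarc in $Z$) say nothing about how the $Z$-arcs sit in the plane, and no mechanism is offered for extracting a splitting circle from them. Moreover, even granting such a splitting, the final step is wrong as stated: the determinant is multiplicative under \emph{connected sum} (a sphere meeting the knot in two points), but not under a general tangle decomposition along four points, where one instead has a bilinear numerator/denominator formula of Conway type; so unless your ``insulation'' is upgraded to a genuine connected-sum decomposition, primality of $p$ does not force one piece to be trivial, and neither the nugatory-crossing contradiction nor the proposed induction on crossing number gets off the ground. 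The corank-$2$ observation is correct (and is also used in the paper, where it yields that any two nontrivial colorings are related by $X_2\equiv aT+bX_1$ mod $p$), but that rigidity alone only gives the ``all or nothing'' statement; it does not produce the contradiction.

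For comparison, the paper never works with the level set of an honest coloring, precisely because an honest coloring has no distinguished crossings to anchor such an argument. Instead it argues through the adjugate $L=pC^{-1}$ of the minor crossing matrix: if some column of $L$ were zero mod $p$, one obtains an integral ``pseudo coloring'' $Y'$ with $C'Y'=e_j-e_n$, i.e.\ the Fox relation fails at exactly two crossings, with defects $+1$ and $-1$. Tracking the maximal color along an Euler circuit of the $4$-valent projection, the uniqueness of the $+1$--crossing is exactly what forces the arcs of maximal color (and, symmetrically, of minimal color) to form nontrivial connected summands, so the knot would be composite --- contradicting the fact that an alternating knot of prime determinant is prime (multiplicativity of the determinant under connected sum, plus the determinant of a nontrivial alternating knot exceeding $1$). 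This is a true two-point splitting, which is why multiplicativity applies; your proposal needs, but does not supply, an argument of comparable strength. The paper then combines the column-nonvanishing of $L$ with a mirror-image/transpose lemma to exhibit one explicitly heterogeneous coloring, and the corank-$2$ rigidity you identified finishes the proof. So your outline contains one correct ingredient (the rigidity) but is missing the core construction that converts a hypothetical color coincidence into a connected-sum decomposition, and the determinant-factorization step would need to be repaired even if that construction were supplied.
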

This provides a nice connection between two knot invariants, the 
determinant, which is relatively easy to calculate, for example, using the
Alexander polynomial, and the least number of colors required on a minimal diagram of a knot, an invariant which is, in general, very difficult to evaluate. The conjecture asserts that in the case of an alternating knot,
if the determinant is prime, then the least number of colors is simply the crossing number.
Conjecture~1 is known to hold for rational knots~\cite{KL,PDDGS}, pretzel knots~\cite{Asaeda},
and many Turk's head knots~\cite{DMMS}. Our goal in this paper is to prove the conjecture for all knots. 

In order to give an overview of our approach we recall some
basic ideas about coloring. Let $k$ be an alternating knot of prime
determinant $p$.  A (Fox) $p$-coloring of a diagram $\P$ of $k$ is
a way of coloring the arcs of $\P$ such that the equation
$2x - y - z \equiv 0$ mod $p$ holds at each crossing, $x$ being the color of
the over arc, while $y$ and $z$ are the colors of the two under arcs.
It's convenient to encode this requirement as an $n \times n$ matrix $C'$,
the crossing matrix,
where $n$ is the number of crossings in $\P$. Each row of $C'$ corresponds
to a crossing and has one $2$ entry and two $-1$'s, the other entries being
$0$. The columns of $C'$ then correspond to arcs of $\P$ and colorings
to vectors $X$ such that $C'X \equiv \vec 0$ modulo $p$.

Now, every constant vector $X = (c,c,c,\dotsc,c)$ gives a coloring,
but if we move to the minor $C$ defined by removing the last row and
column of $C'$, we will have a way of identifying non-trivial
colorings. It turns out that $|\det C| = \det k$, the knot's determinant, which is a knot invariant. As $\det k = p$, a prime, $C$ is
invertible over the rationals. 
We define the coloring matrix $L = p C^{-1}$ as the
classical adjoint of $C$. In Section~\ref{sec:main result on L}, we 
argue that each column of $L$ includes entries that are not 
zero modulo $p$.
The argument depends on the observation
that, as $k$ is alternating and has prime determinant, it is a prime knot.

As no column of $L$ is zero modulo $p$, there will be
heterogeneous colorings of $\P$, where a coloring
is {\it heterogeneous} if it assigns different colors to different arcs.
To complete the argument, we use the fact that the nullity of $C$ is one to
conclude that if one non-trivial coloring is heterogeneous, then they all are.

The structure of our paper is as follows.
In Section 2, we briefly discuss colorability and crossing matrices,
and present preliminary definitions and notation used throughout the
paper.  We also prove two lemmas, one showing that appending a 
zero to a vector in $C$'s null space gives a vector in $C'$'s null space, 
while the other demonstrates that taking the mirror reflection of an
alternating diagram corresponds to transposing the crossing matrix.
In Section 3, we introduce the coloring matrix $L$ and
develop the properties of colorings constructed from its columns. This culminates in a proof that every column of $L$ 
includes entries that are not zero modulo the determinant.
Finally, in Section 4, we prove the conjecture.

\section{Preliminaries}\label{sec-tools}
In this section, we review basic results on colorability; references for this 
material include \cite{livingston}, \cite{Mo}, and \cite{Przyt}.
We end the section with two lemmas.

Following~\cite{livingston}, a {\it diagram} of a knot is a planar representation
with gaps left to show where the knot crosses under itself; 
for example, see figure~\ref{fig:DtoG} below. Let $[k]$ denote
the set of diagrams of a knot $k$.
We say that a diagram is {\it reduced} if it has no nugatory crossings
(see \cite[Chapter 3]{Adams}, for example).
Let $[k_r]$ denote the subset of $[k]$ consisting of all reduced diagrams. 
We say $k$ is $n$-crossing if some $\P \in
[k]$ has $n$ crossings and no other $\P' \in [k]$ has fewer than $n$
crossings. It's known (by \cite{Kauff2, Mura2, Thistle}) that if $k$ is an $n$-crossing
alternating knot, then every $\P$ in $[k_r]$ is alternating and has
$n$ crossings and $n$ arcs.

We will now define {\it $p$-coloring} where
$p$ is an integer greater than $1$, $k$ is a knot, and $\P \in [k]$.
Let $x$, $y$, $z$ denote integers which label the over
arc and two under arcs, respectively, at a crossing of $\P$.  The crossing
satisfies the {\it condition of $p$-colorability} if
\begin{equation}\label{eq:modp}
2x - y - z \equiv 0 \mbox{ mod $p$.}
\end{equation}
We say $k$ is {\it p-colorable} if there is some $\P \in [k]$ such
that the arcs of $\P$ can be labeled, or colored, with the
numbers $0,\dotsc,p-1$ so that at least two numbers are used and
every crossing satisfies the condition of $p$-colorability. The
numbers $0,\dotsc,p-1$ are called {\it colors}.  The specific colors
assigned to the arcs make up a {\it $p$-coloring} of $\P$. A
$p$-coloring where every arc is assigned the same color is called
a {\it trivial coloring}.

Let $\P \in [k_r]$
be an $n$-crossing diagram of the knot $k$.  A {\it
labeling} of $\P$ is a particular indexing of the crossings
$\{c_1,\dotsc,c_n\}$ and of the arcs $\{a_1,\dotsc,a_n\}$. Given
$\P$ and some labeling, we define the {\it crossing matrix} $C'$ as
\begin{equation}\label{eq:C' matrix}
C'_{ij} = \begin{cases} 2 \ \mbox{if $a_j$ is the over arc at crossing $c_i$,}\\
-1 \ \mbox{if $a_j$ is an under arc at crossing $c_i$,}\\
0 \ \mbox{otherwise.}
\end{cases}
\end{equation}
Owing to \eqref{eq:modp} and \eqref{eq:C' matrix}, it follows that a
$p$-coloring can be represented by a vector $X'$ such that $C'X' \equiv
\vec 0$ mod $p$.

The matrix $C'$ provides a straightforward way to compute the
determinant of a knot.  In particular, let $C$ be any $(n-1) \times
(n-1)$ minor of $C'$.  We say $C$ is a {\it minor crossing matrix}.
The {\it determinant} of the knot $k$ is the absolute value of the
determinant of $C$: $\det k = |\det C|$. It follows that $k$ is
$p$-colorable if and only if $\gcd(p,\det k) > 1$. Given $C'$, there
are $n^2$ choices for $C$, but, to avoid ambiguity, we will reserve
the symbol $C$ for the minor crossing matrix obtained by removing
the last column and last row of $C'$. In this paper we will be
investigating $\det k$-colorings of diagrams of $k$ and, for simplicity, we will usually
just say ``coloring'' when the value $\det k$ is clear from the
context.

We conclude this section with two lemmas. The first shows that, by appending
a zero, a vector in the null space of $C$ can be ``extended" to a coloring.

\begin{lemma} \label{lemma:tac on a zero}
Let $k$ be a knot with reduced diagram $\D \in [k_r]$ and 
minor crossing matrix $C$. 
A vector $X$ such that $CX \equiv \vec 0$
mod $\det k$, can be extended to a coloring of the diagram $\D$
by adding a zero as the last entry. That is,
\begin{equation}\label{eq:tac on a zero }
C'\left(\begin{smallmatrix}X\\ 0\end{smallmatrix} \right)  \equiv \vec 0
\mbox{ mod $\det k$}.
\end{equation}
\end{lemma}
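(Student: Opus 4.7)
The plan is to exploit the block decomposition
\[
C' = \begin{pmatrix} C & u \\ w^T & d \end{pmatrix},
\]
where $u \in \Z^{n-1}$ consists of the first $n-1$ entries of the last column of $C'$, $w \in \Z^{n-1}$ consists of the first $n-1$ entries of the last row, and $d = C'_{nn}$. Then
\[
C' \begin{pmatrix} X \\ 0 \end{pmatrix} = \begin{pmatrix} CX \\ w^T X \end{pmatrix},
\]
so the first $n-1$ components are $\equiv \vec 0$ mod $\det k$ by hypothesis, and the lemma reduces to proving that $w^T X \equiv 0$ mod $\det k$.

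The crux will be the claim that \emph{every entry of the row vector $w^T \operatorname{adj}(C)$ is divisible by $\det k$}. By a standard cofactor expansion, the $i$-th entry of $w^T \operatorname{adj}(C)$ equals $\det \widetilde C_i$, where $\widetilde C_i$ is the $(n-1)\times(n-1)$ matrix obtained from $C$ by overwriting its $i$-th row with $w^T$. After permuting rows, $\widetilde C_i$ is identified, up to sign, with the $(n-1)\times(n-1)$ minor of $C'$ obtained by striking out row $i$ and the last column; and every $(n-1)\times(n-1)$ minor of $C'$ has absolute determinant $\det k$---this is precisely what makes the definition $\det k = |\det C|$ in Section~\ref{sec-tools} unambiguous. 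Hence each entry of $w^T \operatorname{adj}(C)$ equals $\pm \det k$, so vanishes modulo $\det k$.

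To close the argument, I would write $CX = (\det k)\, Z$ for some $Z \in \Z^{n-1}$, multiply on the left by $\operatorname{adj}(C)$, and apply the identity $\operatorname{adj}(C)\, C = (\det C)\, I$ to obtain $(\det C)\, X = (\det k)\, \operatorname{adj}(C)\, Z$. Since $|\det C| = \det k \neq 0$, cancellation yields $X = \pm \operatorname{adj}(C)\, Z$ as an identity of integer vectors. Substituting gives
\[
w^T X = \pm\, w^T \operatorname{adj}(C)\, Z \equiv 0 \mbox{ mod $\det k$},
\]
as required. The only delicate point I anticipate is the sign bookkeeping in matching $\widetilde C_i$ with its corresponding submatrix of $C'$, which amounts to counting the transpositions in a cyclic shift of rows; everything else is a formal manipulation of the adjugate identity.
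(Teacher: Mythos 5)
Your proof is correct, but it follows a different route from the paper's. The paper disposes of the last entry of the product in one line: by \cite{livingston}, the last row of $C'$ is a linear combination of the other rows (for an alternating diagram the rows simply sum to zero, so $r_n=-\sum_{i<n}r_i$), hence the $n$th entry of $C'\left(\begin{smallmatrix}X\\ 0\end{smallmatrix}\right)$ is a combination of the first $n-1$ entries, each already $\equiv 0$ mod $\det k$. You instead work through the adjugate: writing $CX=(\det k)Z$, invoking $\operatorname{adj}(C)C=(\det C)I$ to get $X=\pm\operatorname{adj}(C)Z$ over $\Z$, and identifying each entry of $w^{\intercal}\operatorname{adj}(C)$ (via the row-replacement cofactor expansion) with $\pm$ a first minor of $C'$, hence $\pm\det k$ by the standard fact that all first minors of the crossing matrix have the same absolute value---the same fact the paper tacitly uses when it lets $C$ be ``any'' $(n-1)\times(n-1)$ minor in defining $\det k$. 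Both arguments rest on standard cited facts about the crossing matrix, so yours is legitimate; the trade-off is that the paper's dependency-of-rows argument is shorter and needs no sign bookkeeping or nonvanishing hypothesis, while yours needs $\det k\neq 0$ for the cancellation step (harmless here, since a knot's determinant is odd) but has the small virtue of producing the explicit integral formula $X=\pm\operatorname{adj}(C)Z$ and of not caring whether the row dependency has integer coefficients.
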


\begin{proof}
It is clear why the first $n-1$ entries
of the product in \eqref{eq:tac on a zero } should be $0$ mod $\det k$.
The last entry is also $0$ because the last
row of $C'$ can be expressed in terms of the first $n-1$ rows. Indeed, if
$k$ is an alternating knot and $r_i$ are the rows of its
crossing matrix then $r_j = - \sum_{i \ne j} r_i$.  More generally,
Livingston~\cite{livingston} shows that, for any
knot, each row of the crossing matrix can be expressed as a linear combination of
the other rows. Thus, the last entry of \eqref{eq:tac on a zero } is also zero.
\end{proof}

Our second lemma shows a connection between the crossing
matrices of two diagrams related by a mirror reflection.
Fix a specific diagram $\P \in [k_r]$  of an alternating knot $k$ and use the following
procedure to label its arcs and crossings:  After
orienting the knot, pick an arc, and label it $a_1$.
Following the orientation, label the next arc
$a_2$. The crossing that has $a_1$ and $a_2$ as under arcs
is labeled $c_1$. Continuing in this way, the $i$th arc we reach is labeled
$a_i$ and the crossing that has $a_{i-1}$ and $a_i$ as under arcs
is labeled $c_{i-1}$. Finally, the crossing between arcs $a_n$ and $a_1$ is labeled
$c_n$, where $n$ is the number of crossings in $\P$.
We call such a labeling an {\it oriented labeling}.

For an oriented labeling of an alternating diagram with $n$ arcs, the
expression $2a_j-a_i-a_{i+1}$, where $i<n$, which can be read off the
$i$th row of the crossing matrix, expresses that $a_j,a_i,a_{i+1}$
are the arcs present in crossing $c_i$.  For $i= n$, we have
$2a_j - a_1 - a_n$. Similarly,  $2c_l -
c_j - c_{j-1}$, for $j>1$, can be read off the $j$th column of the
crossing matrix, and expresses that the arc $a_j$ is an over
arc at the crossing $c_l$, and is an under arc at the
crossings $c_j$, $c_{j-1}$. For $j = 1$, the expression is $2c_l -
c_1 - c_n$.

Given $\P$, its {\it mirror image} $\P^m$ is the diagram obtained
by reversing all the crossings; that is, we change the over
crossings to under crossings and vice versa.  Oriented labelings of $\P$ correspond
to oriented labelings of $\P^m$:
for a crossing of $\P$ with over arc $a_j$ and under
arcs $a_i$, $a_{i+1}$, the corresponding crossing in $\P^m$ has
$a_i$ as the over arc, and $a_{j-1},a_j$ as the under arcs.
(Here we are taking indices modulo $n$, the crossing number. We will frequently
do this in what follows.)

In summary, the arc $a_i$ of $\P^m$ is an
over arc at crossing $c_{j-1}$, and, as always, $a_i$ is an under
arc at crossings $c_{i-1}$ and $c_i$. Thus, the crossing $c_i$ in $\P$
given by $2a_j - a_i - a_{i+1}$ transforms to the arc $a_i$ in
$\P^m$ given by $2c_{j-1} - c_{i-1} - c_{i}$.  More concisely, $a_j$
and $c_i$ in $\P$ correspond to $c_{j-1}$ and $a_i$ in $\P^m$,
respectively. This leads to the following lemma.

\begin{lemma}\label{mirroring lemma}
Let $k$ be an alternating knot.
Let $C'$ be a crossing matrix for $\P \in [k_r]$. Then the matrix
$C'^\intercal$ is a crossing matrix for $\P^m$.
\end{lemma}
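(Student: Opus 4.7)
My plan is to use the explicit arc–crossing correspondence set up in the preceding paragraphs to identify $(C')^\intercal$ as the crossing matrix of $\P^m$ under a particular (not necessarily oriented) labeling.

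I would begin by fixing an oriented labeling of $\P$, giving arcs $a_1,\dots,a_n$, crossings $c_1,\dots,c_n$, and crossing matrix $C'$; let $\pi(i)$ denote the index of the over arc at $c_i$ in $\P$.  The discussion immediately preceding the lemma already describes an induced labeling of $\P^m$ in which arc $a_i$ is under at $c_{i-1}, c_i$ and over at $c_{\pi(i)-1}$.  Let $D'$ be the crossing matrix of $\P^m$ under this labeling.  Reading off the incidences just described, $D'_{r,s}=2$ precisely when $r=\pi(s)-1$, $D'_{r,s}=-1$ precisely when $r\in\{s-1,s\}$, and $D'_{r,s}=0$ otherwise.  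Equivalently, the $i$-th column of $D'$ is the expression $2c_{\pi(i)-1}-c_{i-1}-c_i$ transcribed from the $i$-th row $2a_{\pi(i)}-a_i-a_{i+1}$ of $C'$ via the substitution $a_k\mapsto c_{k-1}$.

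By definition $(C')^\intercal_{r,s}=C'_{s,r}$, which equals $2$ exactly when $r=\pi(s)$, equals $-1$ exactly when $r\in\{s,s+1\}$, and is $0$ otherwise.  Comparing with the entries of $D'$, the two matrices differ only by the single cyclic shift $r\mapsto r+1$ on row indices, i.e.\ $(C')^\intercal_{r,s}=D'_{r-1,s}$ for all $r,s$.  This shift corresponds to relabeling the crossings of $\P^m$ by $\tilde c_r:=c_{r-1}$, which is a perfectly admissible (if non-oriented) bijective labeling of $\P^m$'s crossings.  Since the crossing matrix is defined for any choice of labeling, we conclude that $(C')^\intercal$ is a crossing matrix for $\P^m$.

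The main hurdle is purely bookkeeping: carefully tracking the off-by-one inherent in the correspondence ``$a_j$ in $\P$ $\leftrightarrow$ $c_{j-1}$ in $\P^m$'' when verifying the entry-by-entry match.  No combinatorial idea beyond the analysis already carried out before the lemma is required.
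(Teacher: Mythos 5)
Your proof is correct and follows essentially the same route as the paper: both identify the crossing matrix of $\P^m$ induced by the correspondence $a_j,c_i \leftrightarrow c_{j-1},a_i$ and then absorb the resulting cyclic shift of indices into a relabeling (row permutation), which does not affect being a crossing matrix. Your version just makes the entry-by-entry bookkeeping explicit.
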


\begin{proof}
Give $\P$ an oriented labeling.  Permute the rows of $C'^\intercal$
by sending the $i$th row to the $(i-1)$st row (for each $i > 2$) and the first row to
the $n$th row; call this matrix $D$. As in the discussion above, $D$ is the
crossing matrix for an oriented labeling of $\P^m$. However, the set
of crossing matrices of a diagram is closed under row and column
permutations, so $C'^\intercal$ is also a crossing matrix for
$\P^m$.
\end{proof}

\section{The Main Result on the Coloring Matrix}\label{sec:main result on L}

Let $k$ be an alternating knot of prime determinant and $\P \in [k_r]$.
Whichever $\P$ we choose, $|\det C| = \det k$, a knot invariant, 
so that $C$ has prime determinant and is therefore invertible over
the rationals.
We define $L = \det k \cdot C^{-1}$ and call $L$ the {\it coloring
matrix}. In this section we will prove that every column of $L$
contains entries that are not zero modulo $\det k$.

Our strategy is to argue by contradiction. We will show that if 
there is a column of zeroes, then there must be 
a ``pseudo coloring'' $Y'$. This is a way of labeling the arcs of $\D$ such
that the coloring condition~\eqref{eq:modp} fails at exactly two crossings, one, call it the $+1$--crossing, where 
$2x - y - z = 1$ and another, the $-1$--crossing, where $2x - y - z = -1$.
We then investigate the properties of $Y'$ eventually
deducing that $Y'$ exists only if $\P$ is the diagram
of a sum of two knots. (The distinguished $+1$-- and $-1$--crossings 
appear in distinct components.)
This is a contradiction; since $k$ is alternating and has prime determinant, it is in fact prime.

We begin with some of the properties of the coloring matrix. The
entries of $L$ are signed minors of $C$ and, therefore, integers.
Note that $CL$ and $LC$ both give the zero matrix mod $\det k$.
In particular, if $w_i$ is the $i$th column of $L$, then $Cw_i \equiv
\vec 0$ mod $\det k$.  We will use contradiction to show that 
$w_i \not\equiv \vec 0 \bmod \det k$.

\begin{lemma}\label{lemma:assume L has trivial column}
Let $k$ be an $n$-crossing alternating knot and let $\P \in [k_r]$.
If the $j$th column of $L$ is $\vec 0$ mod $\det k$, then there is a
pseudo coloring $Y'$ of $k$ such that $Y'$ has all positive entries
and $C'Y' = e_j - e_n$.
\end{lemma}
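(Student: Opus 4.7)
The plan is to unpack the definition of $L$ directly. Since $L = \det k \cdot C^{-1}$, we have $CL = \det k \cdot I$, and if $w_j$ denotes the $j$th column of $L$, then $C w_j = \det k \cdot e_j$ as an equality of integer vectors in $\mathbb{Z}^{n-1}$. The hypothesis that $w_j \equiv \vec 0 \pmod{\det k}$ means every entry of $w_j$ is divisible by $\det k$, so the integer vector $v_j := w_j / \det k$ satisfies $C v_j = e_j$ exactly.

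Next, I would verify that appending a zero produces the desired right-hand side as an identity in $\mathbb{Z}^n$, not merely modulo $\det k$. Since $C$ is the minor of $C'$ obtained by deleting the last row and last column, the first $n-1$ entries of $C'(v_j,0)^\intercal$ coincide with $C v_j = e_j \in \mathbb{Z}^{n-1}$. For the final entry I would invoke the integer identity $r_n = -\sum_{i<n} r_i$ used in the proof of Lemma~\ref{lemma:tac on a zero}, which holds in the alternating case because each arc is an over-arc at exactly one crossing and an under-arc at exactly two, so every column of $C'$ sums to $0$. Consequently the last entry of $C'(v_j,0)^\intercal$ equals $-\sum_{i<n}(e_j)_i = -1$, giving $C'(v_j,0)^\intercal = e_j - e_n$ on the nose.

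To finish, I would adjust $(v_j,0)^\intercal$ to have strictly positive entries without changing its image under $C'$. The constant vector $\vec 1$ lies in $\ker C'$ since each row of $C'$ has entries summing to $2+(-1)+(-1)=0$, so shifting by an integer multiple $c\vec 1$ preserves the equation. Taking $c$ larger than $\max(0, -\min_i (v_j)_i)$ produces $Y' := (v_j,0)^\intercal + c\vec 1$ with every entry positive and $C'Y' = e_j - e_n$, the required pseudo coloring.

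I expect the main subtlety to be the second step: knowing that the last coordinate comes out to exactly $-1$, and not some other integer, requires the alternating hypothesis in order to supply the row relation with coefficient $1$ on every row. For a general knot, Livingston's theorem gives some non-trivial linear dependence among the rows of $C'$, but the coefficients need not be equal, and without the uniform column-sum-zero property the last entry of $C'(v_j,0)^\intercal$ could not be pinned down as $-1$.
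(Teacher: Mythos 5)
Your proposal is correct and follows essentially the same route as the paper: divide the $j$th column of $L$ by $\det k$ to get an integer vector $Y$ with $CY = e_j$, append a zero and use the alternating row relation $r_n = -\sum_{i<n} r_i$ (i.e.\ column sums of $C'$ are zero) to pin the last entry down to $-1$, then shift by a multiple of the all-ones vector in $\ker C'$ to make the entries positive. Your explicit justification of why the last coordinate is exactly $-1$ is just a spelled-out version of the step the paper delegates to the proof of Lemma~\ref{lemma:tac on a zero}.
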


\begin{proof}
Let $L = \left(w_1 \cdots w_{n-1} \right)$ be the coloring matrix
of an alternating knot $k$.  For some $j<n$ we have $w_j \equiv \vec
0$ mod $\det k$. Equivalently, the $j$th column of $C^{-1}$ has all
integer entries. Let $Y$ be the $j$th column of $C^{-1}$, i.e.,  $Y =
\frac{1}{\det k}w_j$. Then, from the equation $CC^{-1}$ = $I$, we
infer that $CY = e_j$. Set $Y' = \left(\begin{smallmatrix} Y \\ 0
\end{smallmatrix}\right)$.  As in the proof of  lemma~\ref{lemma:tac on a zero}, we find that
\begin{equation} \label{eq:full integer relation}
C'Y' = e_j - e_n,
\end{equation}
where we now consider $e_j$ to be a vector of $\mathbb{R}^n$. Let $T$
denote the vector of all $1$'s, that is, a trivial coloring. To
ensure that $Y'$ has all positive entries we note that $C'T = \vec
0$, hence $C'(Y' + mT) = e_j - e_n$.  So, choose $m$ large
enough to ensure that $Y' + mT$ has all positive entries, and take that to be
$Y'$.
\end{proof}

\noindent \textbf{Remark:}
The pseudo coloring $Y'$ described by the lemma
is not a trivial coloring. A trivial coloring would give $C'Y' = \vec 0$.

\medskip

\begin{figure}[ht]
\begin{center}
\includegraphics[scale=0.5]{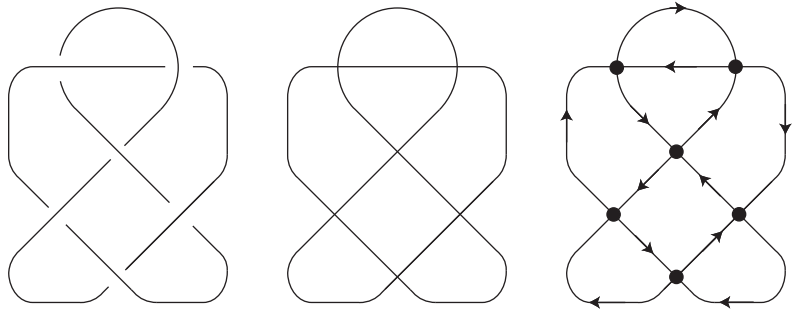}
\caption{From left to right: a diagram $\D$ of an alternating knot; ``filling in" the gaps; the graph $G_{\D}$.}\label{fig:DtoG}
\end{center}
\end{figure}

By ``filling in'' the gaps of the diagram $\D$ of an alternating knot, we have a regular projection of the knot $k$. We 
will view this as a four valent planar graph $G_{\D}$ by placing vertices at each crossing. 
As in figure~\ref{fig:DtoG}, we make a directed graph by orienting 
the ``under arc" edges away from the crossing vertex and the  ``over arc" edges towards the vertex. 
This choice of orientation will prove useful in what follows. 

In particular, it ensures that there is an Euler circuit.
Indeed, each vertex of  $G_{\D}$ has in-degree two
and out-degree two. Since $\D$ is the diagram of a knot, $G_{\D}$ is a connected graph and it follows  
that $G_{\D}$ has a directed Euler circuit, i.e., a closed path that runs through each edge exactly once, and passes through each vertex twice. (For example, see \cite[Theorem 1.4.24]{West}.)   Fix an Euler circuit $E$ in $G_{\D}$ and a starting edge $e_1$. Following $E$, the next edge will
be $e_2$ and so on. We then write
$$ E = (e_1, \ldots, e_{2n})$$
and subpaths of the Euler circuit will be denoted as subsequences of consecutive elements from the sequence above.

Let $Y'$ be a pseudo coloring as in lemma~\ref{lemma:assume L has trivial column}.
As we remarked above, $Y'$ is not a trivial coloring 
and therefore has a largest color $h$ and a smallest color $l$ with $0<l<h$.
We can carry the coloring of $\D$ over to a coloring of the edges of the digraph $G_{\D}$.
Call an edge with color $h$ an {\it $h$--edge}. 
Recall that there is a unique crossing, which we will call {\em the $+1$--crossing}, where $2x-y-z = 1$. Similarly, {\em the $-1$--crossing} will refer to the unique crossing where 
$2x - y - z = -1$.  Every other crossing is a {\em $0$--crossing}, i.e., 
$2x - y - z = 0$.
We will also call refer to the corresponding vertices of $G_{\D}$ as $+1$--, $0$--, or $-1$--crossings.

We will now investigate subpaths of the Euler circuit $E$ made
up exclusively of $h$--edges.

\begin{figure}[ht]
\begin{center}
\includegraphics[scale=0.8]{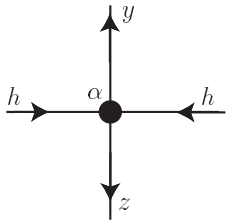}
\caption{Possible colors at an $\alpha$--crossing ($\alpha = -1$, $0$, or $1$) where an $h$--edge terminates. Set $y = h - \epsilon$. Since $\alpha = 2h - y - z$, then $z = h+ \epsilon + \alpha$.}\label{fig3}
\end{center}
\end{figure}

Let us pick an $h$--edge and follow $E$ starting with this
$h$--edge. The possible colors of edges at the vertex where the $h$--edge
ends are as illustrated in figure~\ref{fig3}. We may assign
one of the outgoing edges the color $h - \epsilon$ (with $\epsilon \geq 0$). The other 
will have color $h + \epsilon + \alpha$.
There are three possibilities:
\begin{itemize}
\item $-1$--crossing: over arc of color $h$, one of the under arcs of color $h - \epsilon$, and 
the other under arc of color $h + \epsilon + 1 > h$, which is impossible.
\item $0$--crossing: over arc of color $h$, one of the under arcs of color $h - \epsilon$, the other under arc of color  $h + \epsilon$, which works provided $\epsilon = 0$.
\item $+1$--crossing: over arc of color $h$, one of the under arcs of color $h - \epsilon$,
the other under arc of color $h + \epsilon -1$, which works for $\epsilon = 0$ or $\epsilon = 1$. These amount to the same situation:
over arc $h$; one of the under arcs $h$; and the other under arc $h - 1$.
\end{itemize}

We thus see that an $h$--edge points to the $+1$--crossing or to one of the
$0$--crossings, but never to the $-1$--crossing. Moreover, if it points
to a $0$--crossing then the other edges incident to this vertex will each
receive the color $h$ (we'll call such a $0$--crossing and the corresponding vertex in $G_{\D}$ an {\em $h$-vertex}); if it points to the $+1$--crossing then one of the 
outgoing edges receives color $h-1$ whereas the remaining edges receive
color $h$. In this way, following the Euler circuit $E$ starting from
an $h$--edge, one visits $h$-edges incident to $h$-vertices until one
reaches an edge incident to the $+1$--crossing. Here there are two
possibilities. Either the next edge is the $h-1$ edge and the
visiting of consecutive $h$--edges ends here or the next edge is
an $h$--edge and the visiting of consecutive $h$ edges continues.

We remark that any subpath of the Euler circuit which visits only
$h$-edges (we'll call it an {\em $h$--path} for short) and visits the $+1$--crossing twice,
must end at this crossing. As a matter of fact, its next edge
would be the $h-1$ edge incident to the $+1$--crossing. Consider
then the longest such $h$--path, call it $H$. 
By shifting the labels of the edges if necessary we have:
$$H = (e_1, \ldots , e_{n_H}) \subseteq (e_1, \ldots, e_{n_H}, e_{n_H+1}, \ldots, e_{2n}) = E.$$
Note that $e_{n_H}$ is necessarily directed towards the $+1$--crossing.

We now argue that each $h$--edge of $E$ is contained in $H$.

Assume to the contrary that $e_{n_H + l}$ is an $h$--edge. Then
resuming the Euler circuit $E$ as of this edge, either all remaining 
edges are $h$--edges -- which would imply that $e_{2n}$ is an $h$--edge
which would contradict $H$ being the longest $h$--path -- or there are 
edges other than $h$--edges between $e_{n_H+l}$ and $e_{2n}$. But
we have already noticed that the progression from an $h$--edge to an edge
with a less color goes through the unique $+1$--crossing vertex to its $h-1$ edge. However, this step has already been taken; it is $(e_{n_H}, e_{n_H+1})$. Since in an Euler circuit each edge is visited exactly once, then we
conclude that there are no $h$--edges past $e_{n_H}$.

Let us now observe that $e_1$ is the unique $h$--edge at its initial vertex
$v_1$. 
Indeed, the edges corresponding to the over arc cannot receive color $h$, for $H$ is the longest $h$--path, 
%Using the fact that $v_1$
%must be a $0$-crossing or the $-1$-crossing 
and it is straightforward to check that the other edge originating 
at $v_1$ is also not  an $h$--edge.  
Consequently, the vertices with outgoing $h$--edges are $v_1$, $h$--vertices, and the $+1$--crossing.

% Suppose 
%their color is $h-\epsilon$, ($\epsilon > 0$). Then the color
%of the other outgoing edge is
%$$z = 2(h - \epsilon) - h = h - 2 \epsilon \leq h -2$$
%for a $0$--crossing or 
%$$z = 2(h - \epsilon) - h  +1 = h - 2  \epsilon + 1 \leq h -1$$
%for the $-1$--crossing.

%In either case this vertex has an outgoing $h$--edge and all other
%edges incident to it are not $h$--edges.

\begin{figure}[ht]
\begin{center}
\includegraphics[scale=0.4]{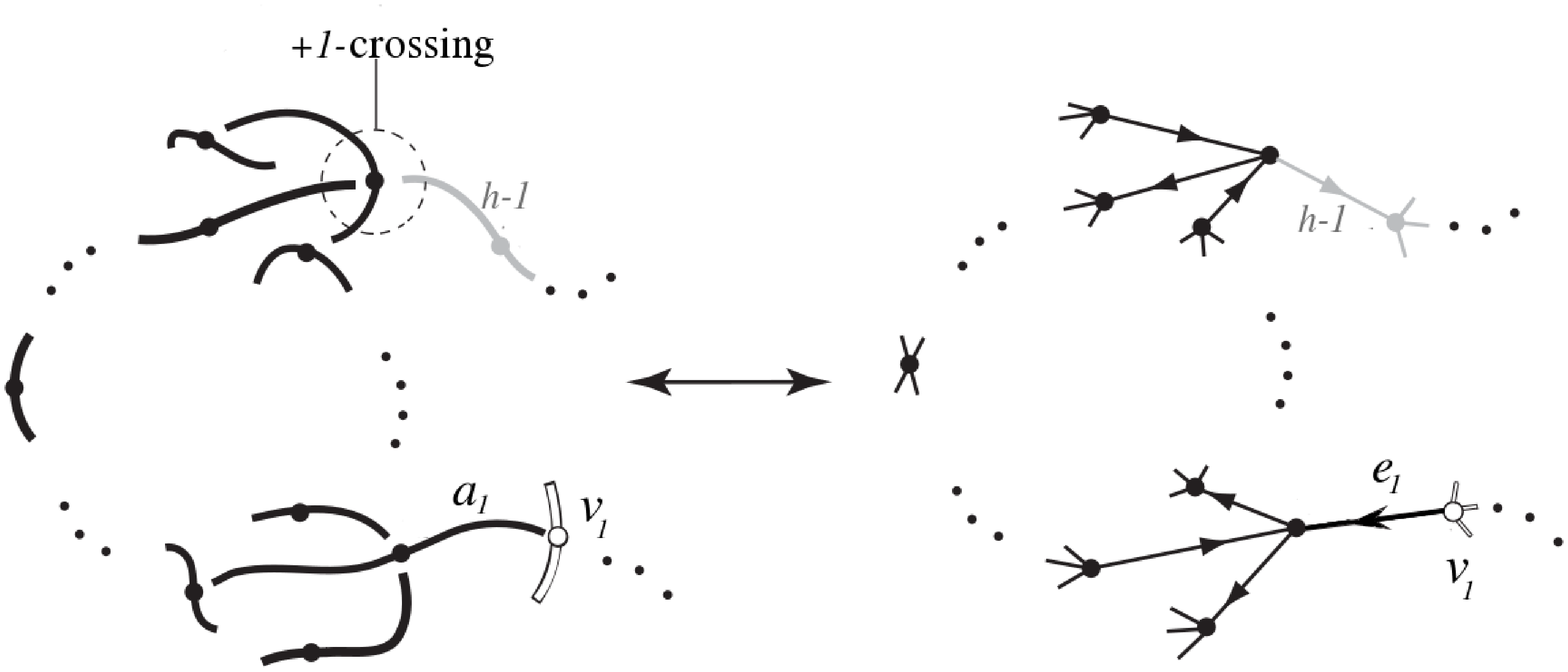}
\caption{The subpath $H$ of the Euler circuit (at right) and associated component $\H$ of $\D$ (at left).  All the black edges and arcs are $h$--edges and $h$--arcs.
}\label{fig:Ecircuit}
\end{center}
\end{figure}

Let us call $\H$ the portion of the knot diagram $\D$ which 
gives rise to the $h$--path $H$ of $E$. That is, $\H$ is the set of
{\em $h$--arcs} (i.e., arcs of color $h$) of $\D$ (see figure~\ref{fig:Ecircuit}). We will now argue that $\H$
factors out of $\D$ thereby showing that the knot under consideration
is not prime.  
The previous paragraph allows us to conclude that if we 
follow an orientation of $\D$ starting on an $h$--arc, then we remain 
on $h$--arcs until we reach either the $+1$--crossing or the crossing 
corresponding to $v_1$.  
Let $a_1$ be the arc of $\H$ which gives
rise to $e_1 \in H$ and choose an orientation of the knot so that 
starting at $a_1$ one progresses into $\H$.

Starting then at $a_1$ and following the orientation of the knot we 
eventually come back to $a_1$ so at some point we must have left $\H$. 
The only way of leaving $\H$ is through the $+1$--crossing
or the $v_1$ crossing. As we have chosen an orientation for $a_1$ 
directed away from the $v_1$ crossing, then,
reasoning along the same lines as we did 
for proving there were no $h$--edges outside $H$,
we only leave $\H$ once, at the $+1$--crossing, 

If this portion of the diagram (starting with $a_1$ and continuing until we leave $\H$) did not go over all the $h$--arcs of
$\H$ then starting at any one of the $h$--arcs left out we eventually come back to $a_1$. Only this time we never left $\H$ for the only 
crossings available are those corresponding to $h$-vertices and possibly the over arcs of the $+1$--crossing.  This implies the arc preceding $a_1$ is also an $h$--arc which in turn 
implies that the corresponding vertex $v_1$ in $G_{\D}$ has two $h$--edges stemming from it; but this possibility was ruled out above.  The contradiction shows that 
there are no $h$--arcs omitted when we start at $a_1$ and follow the 
orientation until we leave $\H$.
In other words, $\H$ corresponds to a summand of the knot under study. Moreover, by extending $\H$ just beyond the
$+1$--crossing if necessary, it is a summand that includes at least one crossing.
As $\D$ was a reduced diagram, if follows that $\H$ constitutes a
non-trivial summand. Repeating the reasoning above for 
the least color $l$ we conclude that there is a second non--trivial summand
distinct from the preceding one. We have thus proven the following
result.

\begin{prop} \label{lemma:D composite}
A knot with a diagram endowed with a $Y'$ pseudo coloring is not
a prime knot.
\end{prop}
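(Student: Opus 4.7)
The plan is to assemble the case analysis already developed before the statement into a clean argument, exhibiting a non-trivial connected-sum decomposition of $k$ from the pseudo coloring $Y'$. I would first pass from the diagram $\D$ to the four-valent directed planar graph $G_\D$ (vertices at crossings; under-arc edges pointing out, over-arc edges pointing in). Because $G_\D$ is connected and every vertex has in-degree and out-degree $2$, a directed Euler circuit $E = (e_1,\ldots,e_{2n})$ exists, and I pull back the pseudo coloring from arcs to edges of $E$. Since $Y'$ is non-trivial (by the remark following Lemma~\ref{lemma:assume L has trivial column}), there is a strictly positive largest color $h$.

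Next I would carry out the case analysis at any vertex where an $h$-edge terminates, using the local relation $2x-y-z \in \{-1,0,+1\}$. The conclusion is that such a vertex is either a $0$-crossing with all four incident edges of color $h$ (an \emph{$h$-vertex}) or the unique $+1$-crossing (at which one outgoing edge has color $h-1$ and the others color $h$); no $h$-edge ever terminates at the $-1$-crossing. Using this, I would take the longest $h$-subpath $H = (e_1,\ldots,e_{n_H})$ of $E$ (after relabeling), observe that $e_{n_H}$ necessarily points into the $+1$-crossing, and argue that no $h$-edge lies past $e_{n_H}$: exiting an $h$-stretch requires traversing the edge $(e_{n_H},e_{n_H+1})$, which in an Euler circuit cannot happen twice. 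A symmetric check at the initial vertex $v_1$ of $e_1$ shows $e_1$ is the only outgoing $h$-edge there.

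The main step is to transfer $H$ back to the diagram. Let $\H$ be the collection of $h$-arcs of $\D$ and let $a_1$ be the arc producing $e_1$; orient $k$ so that, starting at $a_1$, one enters $\H$. Following the orientation of the knot, I claim the strand must exit $\H$ exactly once, and precisely at the $+1$-crossing. Indeed, the only places it can leave $\H$ are the $+1$-crossing and $v_1$, and the orientation of $a_1$ rules out $v_1$. If this traversal failed to cover all arcs of $\H$, then starting at a missed $h$-arc we would return to $a_1$ without ever leaving $\H$; this would force a second $h$-edge out of $v_1$, contradicting the uniqueness of $e_1$ established above. Therefore $\H$ meets its complement in $\D$ along exactly the two strands at $v_1$ and at the $+1$-crossing, which is precisely the data of a connected-sum decomposition. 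Since $\D$ is reduced, the summand $\H$ (extended just past the $+1$-crossing if necessary) contains at least one crossing, hence is non-trivial. Re-running the entire argument with the smallest color $l$ and the $-1$-crossing in place of $h$ and the $+1$-crossing produces a second non-trivial summand, disjoint from the first, so $k$ is composite.

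The main obstacle I anticipate is not any single step but the bookkeeping: making the case analysis at an $h$-terminating vertex genuinely exhaustive (across choice of incoming edge and over-/under-arc status), and then arguing rigorously that the geometric interface between $\H$ and its complement consists of exactly two arcs of $\D$ rather than possibly more. Everything else amounts to unpacking the Euler-circuit combinatorics already sketched before the statement.
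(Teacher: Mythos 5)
Your proposal is correct and follows essentially the same route as the paper: the oriented four-valent graph $G_\D$, the Euler circuit, the local case analysis at vertices where an $h$--edge terminates, the maximal $h$--path argument showing $\H$ gives a non-trivial summand, and the repetition with the least color $l$ to obtain a second summand. The bookkeeping concerns you flag are handled in the paper at the same level of detail you sketch, so no essential ingredient is missing.
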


Finally, we prove our main result on the coloring matrix.
\begin{prop}\label{prop:L has no triv. columns}
Let $k$ be an alternating knot with a prime determinant $p$ and $\P \in [k_r]$.
Then, every column of the coloring matrix $L$ includes entries that are not zero modulo $p$.
\end{prop}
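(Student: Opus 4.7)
The plan is to prove Proposition~\ref{prop:L has no triv. columns} by contradiction, simply chaining together the two preceding results. Concretely, I would suppose that for some $j \in \{1,\dotsc,n-1\}$, the $j$th column $w_j$ of $L$ satisfies $w_j \equiv \vec 0 \bmod p$. Then Lemma~\ref{lemma:assume L has trivial column} applies verbatim: it produces a vector $Y'$ with all positive integer entries and $C'Y' = e_j - e_n$, i.e., a pseudo coloring of $\P$ in the sense used throughout Section~\ref{sec:main result on L}.

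Next, I would invoke Proposition~\ref{lemma:D composite} to conclude that $k$ is not a prime knot. All that remains is to derive a contradiction with the alternating/prime determinant hypothesis. For this, I would use two classical facts: (i) the determinant is multiplicative under connected sum, so if $k = k_1 \# k_2$ then $\det k = \det k_1 \cdot \det k_2$; and (ii) by the Bankwitz--Crowell bound, a nontrivial reduced alternating diagram on $n \ge 1$ crossings has determinant at least~$n$, so in particular any alternating knot with determinant~$1$ is the unknot. Combining these, if $k$ were composite we would have $p = \det k_1 \cdot \det k_2$ with both factors $\ge 1$, and primality of $p$ would force one factor, say $\det k_2$, to equal~$1$; but $k_2$ is itself alternating (a summand of an alternating knot admits an alternating diagram), so $k_2$ is trivial and the decomposition is trivial. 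This contradicts the compositeness produced by Proposition~\ref{lemma:D composite}, completing the argument.

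Since Lemma~\ref{lemma:assume L has trivial column} and Proposition~\ref{lemma:D composite} do all of the real combinatorial and topological work, the only genuine obstacle here is the last step, namely justifying that an alternating knot of prime determinant is prime. That is a well-known consequence of the two classical facts above, so I do not expect substantive difficulty; the entire proof should fit in a few lines once the setup from Section~\ref{sec:main result on L} is in place.
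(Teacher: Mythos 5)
Your proposal is correct and follows essentially the same route as the paper: assume a zero column, invoke Lemma~\ref{lemma:assume L has trivial column} and then Proposition~\ref{lemma:D composite} to get compositeness, and contradict this with the fact that an alternating knot of prime determinant is prime (via multiplicativity of the determinant under connected sum and the Bankwitz-type lower bound on the determinant of a reduced alternating diagram, which is exactly the paper's cited \cite[Proposition~13.30]{BZ}). The only cosmetic difference is that the paper derives determinant multiplicativity explicitly from the Alexander polynomial, while you state it directly.
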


\begin{proof}
Suppose, to the contrary, that some column of $L$ is zero modulo $p$.
Then, by  lemma \ref{lemma:assume L has trivial column}
there is a pseudo coloring $Y'$ and by proposition~\ref{lemma:D composite}
this implies $k$ is composite. 

However, an alternating knot of prime determinant is prime. To see this, let $k_1\#k_2$
denote the connected sum of knots $k_1$ and $k_2$, and let $\Delta_k(t)$ be the
Alexander polynomial of the knot $k$. Now, $\det k = |\Delta_k(-1)|$ and
$\Delta_{k_1 \# k_2}(t) = \Delta_{k_1}(t) \Delta_{k_2}
(t)$ (e.g., see \cite[Chap.\,6]{Murasugi}). From this it follows that
$\det (k_1 \# k_2) = \det k_1 \cdot \det k_2$. To complete the argument, note 
that the determinant of a non-trivial alternating knot is not one. 
For example, the determinant is bounded below by the crossing number
(see \cite[Proposition~13.30]{BZ}).
%Indeed, by \cite{C, MuraI, MuraII}, for an alternating knot $k$, the degree of $\Delta_k(t)$ is twice the genus 
%and its coefficients alternate in sign. As a non-trivial knot has genus at least one, the degree of $\Delta_k(t)$ is
%at least two, and $\det k = \Delta_k(-1)$ is therefore at least three. 
\end{proof}

\section{The Proof of the Conjecture}
In this section, we prove conjecture \ref{k-h conj}. Let $k$ be an
alternating knot of prime determinant and $\P \in [k_r]$. We will
say a coloring of $\P$ is {\it heterogeneous} if it assigns
different colors to different arcs. Thus, we can prove the
conjecture by showing that every nontrivial coloring of $\P$ is
heterogeneous. There are two steps in the argument. First we show
that if one coloring of $\P$ is heterogeneous, then they all are. We
conclude the argument by showing that there is such a heterogeneous
coloring.

For the first part, we introduce the idea of a fundamental coloring.
We say $\P$ has {\it one fundamental coloring} if given any two distinct nontrivial colorings $X_1$ and $X_2$
there are integers $a,b$ such that $X_2 \equiv aT + bX_1$ mod $\det k$
where $T$ is the trivial coloring, a vector of all $1$'s.
%; recall from the proof of lemma \ref{lemma:assume L has trivial column} that $T$ is the trivial coloring.
There are two immediate consequences.

\begin{lemma} \label{lemma:props of fundamental colorings}
Let $k$ have prime determinant and suppose
$\P \in [k]$ has one fundamental coloring. If a nontrivial coloring assigns
different colors to two particular arcs of $\P$, then
every nontrivial coloring will do so. Thus, if $\P$ has a heterogeneous
coloring, then every nontrivial coloring of $\P$ is heterogeneous.
\end{lemma}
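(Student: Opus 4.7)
The plan is to reduce the second assertion to the first and then use the one-fundamental-coloring hypothesis together with the primality of $\det k$ to propagate non-equality of entries from one coloring to every nontrivial coloring.

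First I would observe that the second sentence is an immediate corollary of the first: a coloring is heterogeneous exactly when, for every pair of arc-indices $i \ne j$, the corresponding entries of the coloring vector differ mod $\det k$. So if $X_1$ is heterogeneous, applying the first sentence to each pair $(i,j)$ shows that any other nontrivial coloring $X_2$ also assigns different colors to arcs $i$ and $j$, and hence is heterogeneous.

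For the first sentence I would fix a nontrivial coloring $X_1$ and indices $i \ne j$ with $(X_1)_i \not\equiv (X_1)_j \pmod{\det k}$, and let $X_2$ be an arbitrary nontrivial coloring. By the one-fundamental-coloring hypothesis there exist integers $a,b$ with $X_2 \equiv aT + bX_1 \pmod{\det k}$. Subtracting the $j$th entry from the $i$th and using $T_i = T_j = 1$ gives
\[
(X_2)_i - (X_2)_j \;\equiv\; b\bigl((X_1)_i - (X_1)_j\bigr) \pmod{\det k}.
\]
So the conclusion will follow once I show that the right-hand side is not zero mod $\det k$.

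The key step, which is where both the nontriviality of $X_2$ and the primality of $\det k$ enter, is to rule out $b \equiv 0 \pmod{\det k}$. If $b \equiv 0$, then $X_2 \equiv aT \pmod{\det k}$, which is a trivial coloring, contradicting the hypothesis that $X_2$ is nontrivial. Hence $b \not\equiv 0$; combined with $(X_1)_i - (X_1)_j \not\equiv 0$ and the fact that $\mathbb{Z}/\det k\,\mathbb{Z}$ is a field (since $\det k$ is prime), the product $b((X_1)_i - (X_1)_j)$ is nonzero mod $\det k$. Therefore $(X_2)_i \not\equiv (X_2)_j$, completing the argument. The only real obstacle is recognizing that one needs both ingredients at this last step — without primality a nontrivial $b$ could still produce a zero product, and without nontriviality of $X_2$ there would be no control on $b$ at all.
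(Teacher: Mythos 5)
Your argument is correct and is exactly the computation the paper leaves implicit (the lemma is stated there as an ``immediate consequence'' of the one-fundamental-coloring definition, with no written proof): writing $X_2 \equiv aT + bX_1$, noting $b \not\equiv 0$ since $X_2$ is nontrivial, and using primality of $\det k$ to conclude $(X_2)_i - (X_2)_j \equiv b((X_1)_i-(X_1)_j) \not\equiv 0$. So you have simply supplied the details the authors considered immediate, in the same spirit as their intended argument.
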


So, in order to complete the first part of the argument, it will be enough
to show that a diagram of a knot of prime determinant has one 
fundamental coloring. 

\begin{prop}\label{prop:exists a fundamental coloring}
Let $k$ be a knot with prime determinant and $\P \in [k_r]$. 
Then $\P$ has one fundamental coloring.
\end{prop}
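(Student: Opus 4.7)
The plan is to establish that, modulo $p = \det k$, the solution space of $C'X' \equiv \vec 0$ is exactly two-dimensional over the field $\Z/p$, spanned by the trivial coloring $T$ and a single ``extension-by-zero'' of a generator of $\ker(C \bmod p)$. Once this is in hand, two-dimensional linear algebra over the field $\Z/p$ will deliver the fundamental-coloring property: any two nontrivial colorings, expressed in this basis, must satisfy a relation $X_2 \equiv a T + b X_1 \pmod p$.

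First, I would decompose an arbitrary coloring $X'$ as follows. Setting $c$ equal to the last entry of $X'$, the vector $X' - cT$ has last entry $0$ and is still a coloring (because $C'T \equiv \vec 0$). Writing $X' - c T = \binom{X}{0}$, the congruence $C'\binom{X}{0} \equiv \vec 0 \pmod p$, read off the first $n-1$ rows of $C'$, forces $CX \equiv \vec 0 \pmod p$. Conversely, Lemma~\ref{lemma:tac on a zero} shows every such $X$ yields a coloring. Consequently, the mod-$p$ coloring space decomposes as $\langle T\rangle \oplus \bigl\{\binom{X}{0}: CX\equiv\vec 0\bmod p\bigr\}$, and its dimension is $1 + \dim \ker(C \bmod p)$.

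Next, I would verify that $\ker(C \bmod p)$ is one-dimensional. Since $C$ is an integer $(n-1)\times(n-1)$ matrix with $|\det C| = p$ and $p$ is prime, the Smith normal form of $C$ must have invariant factors $1, 1, \ldots, 1, p$: the factors form an ascending divisibility chain whose product is $\pm p$, and primality of $p$ forbids any factorization other than one factor equal to $p$ with the rest units. Reducing mod $p$ gives $\mathrm{rank}(C) = n-2$, hence $\dim \ker(C \bmod p) = 1$. Let $w$ be a generator.

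Finally, write $X_1 \equiv c_1 T + b_1 \binom{w}{0}$ and $X_2 \equiv c_2 T + b_2 \binom{w}{0}$ mod $p$. Nontriviality of $X_1$ forces $b_1 \not\equiv 0 \pmod p$; in the field $\Z/p$ we can invert $b_1$, solve the first equation for $\binom{w}{0}$ and substitute into the second to produce integers $a, b$ satisfying $X_2 \equiv a T + b X_1 \pmod p$. The main subtlety is the Smith-normal-form step establishing nullity one — this is precisely where the primality of $\det k$ is essential, since over a non-prime modulus the kernel could be higher-dimensional and a single $X_1$ would no longer suffice.
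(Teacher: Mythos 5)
Your proof is correct and takes essentially the same route as the paper: both arguments reduce everything to the fact that $C$ has nullity one mod $p$ (you via Smith normal form, the paper via integer triangularization of $C$), so that the mod-$p$ coloring space is two-dimensional, contains the trivial coloring $T$, and is therefore spanned by $T$ together with any one nontrivial coloring. Your explicit decomposition $\langle T\rangle \oplus \bigl\{\binom{X}{0} : CX \equiv \vec 0 \bmod p\bigr\}$ is just a more explicit phrasing of the paper's step that colorings are determined by the colors of two arcs, and your solve-for-$a,b$ coefficient computation matches the paper's closing formulas.
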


\begin{proof} 
Let $k$ have determinant $p$ and $\P$ have $n$ crossings. We will show that the colorings of $\P$ 
are determined by the colors of two specific arcs in $\P$.

Since $\det k = p$, the determinant of the minor crossing matrix is 
$\det C$ = $\pm p$.  With elementary row operations,
and the Euclidean algorithm for finding the gcd of two integers, we
can put $C$ in triangular form with integers on the diagonal. Its
determinant is then the product of the diagonal entries; all must be
$\pm 1$ except for one, which is $\pm p$. It follows that, mod $p$,
the rank of $C$ is $n-2$; we'll write $\dim \im C = n-2$.
Then the nullity, or dimension of the null space, is one; 
$\dim \ker C =1$.

This means that the dimension of the mod $p$ null space of the crossing matrix
$C'$ is two. Indeed, $\dim \im C' \ge \dim \im C = n-2$, so $\dim \ker C' \le 2$.  On the other hand, 
the trivial coloring assigning $1$ to every arc is not in $\ker C$ but is in $\ker C'$ mod $p$, so
$1 = \dim \ker C < \dim \ker C' \le 2$.
Therefore, the null space of $C'$ mod $p$ has dimension two.

We can now demonstrate that there is one fundamental coloring.
As the nullity of $C'$ is two, every coloring is determined by the coloring of two specific arcs of $\P$.
Consider two non-trivial colorings of $\P$, $X_1$ induced by coloring the two arcs $x_1$ and $y_1$ and $X_2$ induced by the colors $x_2$ and $y_2$. 
We can show that there is one fundamental coloring by finding $a$ and $b$ so that $X_2 \equiv a T + b X_1$.
Let  $$a \equiv  \frac{y_1x_2 - x_1 y_2}{y_1 - x_1} \quad \quad \mbox{ and } \quad \quad b \equiv \frac{y_2 - x_2}{y_1 - x_1}.$$
(Since $X_1$ is nontrivial, $y_1 \not\equiv x_1$ and $y_1-x_1$ has an inverse mod $p$.)  Then 
$$x_2 \equiv a \cdot 1 + b \cdot x_1 \quad \quad \mbox{ and } \quad \quad y_2 \equiv a \cdot 1 + b \cdot y_1,$$
as required.
\end{proof}

Combining lemma~\ref{lemma:props of fundamental colorings} and 
proposition~\ref{prop:exists a fundamental coloring} we have completed the first part of our argument:

\begin{cor}\label{cor:nullity corollary}
Let $k$ be a knot with prime determinant and $\P \in [k_r]$.  If
$\P$ has one heterogeneous coloring, then any nontrivial coloring is
heterogeneous.
\end{cor}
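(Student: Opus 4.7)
The plan is to simply chain together the two immediately preceding results, since the corollary is essentially their concatenation. First I would invoke Proposition \ref{prop:exists a fundamental coloring} with the hypotheses of the corollary: because $k$ has prime determinant and $\P \in [k_r]$, we obtain that $\P$ has one fundamental coloring. This supplies the hypothesis needed by Lemma \ref{lemma:props of fundamental colorings}.

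Next I would apply Lemma \ref{lemma:props of fundamental colorings} directly. Its hypothesis (that $k$ has prime determinant and $\P$ has one fundamental coloring) is now verified, and its conclusion is precisely that if some nontrivial coloring of $\P$ assigns distinct colors to distinct arcs, then every nontrivial coloring does so as well. Taking the hypothesis of the corollary -- that $\P$ admits at least one heterogeneous coloring -- as input, the lemma yields exactly the stated conclusion.

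There is no real obstacle: the work has already been done in establishing the fundamental coloring structure (Proposition \ref{prop:exists a fundamental coloring}) and in propagating heterogeneity across the two-parameter family of colorings (Lemma \ref{lemma:props of fundamental colorings}). The only minor point to check is that $\P \in [k_r]$ in the hypothesis of the corollary is compatible with the $\P \in [k_r]$ required by the proposition, and that the use of $\P \in [k]$ in the lemma's statement is harmless here since $[k_r] \subseteq [k]$. Hence the proof reduces to a one-line citation of these two results.
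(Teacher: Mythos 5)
Your proposal is correct and matches the paper exactly: the corollary is stated there as the immediate combination of Lemma~\ref{lemma:props of fundamental colorings} and Proposition~\ref{prop:exists a fundamental coloring}, which is precisely your argument. Nothing further is needed.
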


\noindent \textbf{Remark:}
The corollary applies not only to alternating knots, but to any knot
of prime determinant.

\medskip

It remains to show that there is a heterogeneous coloring.

\begin{prop}\label{prop:existence}
Let $k$ be an alternating knot with prime determinant.  If $\P \in
[k_r]$, then $\P$ admits a heterogeneous coloring.
\end{prop}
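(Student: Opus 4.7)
The plan is to prove the stronger fact that every nontrivial coloring of $\P$ is heterogeneous; Proposition \ref{prop:existence} then follows since nontrivial colorings exist (extending a column of $L$ by $0$ gives one, via Lemma \ref{lemma:tac on a zero} and Proposition \ref{prop:L has no triv. columns}). By Lemma \ref{lemma:props of fundamental colorings}, it will suffice to produce, for each pair of distinct arcs $a_i, a_j$ of $\P$, some single nontrivial coloring that assigns them different colors.

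Fix such a pair $(i,j)$, and let $C_j$ denote the principal minor of $C'$ obtained by removing the $j$th row and the $j$th column, with associated coloring matrix $L_j := \mathrm{adj}(C_j)$. I would first argue that Proposition \ref{prop:L has no triv. columns} applies to $C_j$ and not merely to the distinguished minor of Section \ref{sec:main result on L}: cyclically relabeling the arcs and crossings of $\P$ so that $a_j$ plays the role of the ``last'' arc turns $C_j$ into that distinguished minor, and the argument of Section \ref{sec:main result on L} uses only intrinsic features of $\P$. Hence every column of $L_j$ is nonzero modulo $p$. Applying the same proposition to the mirror $\P^m$ (also a reduced alternating diagram of prime determinant $p$), and using Lemma \ref{mirroring lemma} to recognize its coloring matrix for the analogous minor as $L_j^\intercal$, I conclude that every row of $L_j$ is also nonzero modulo $p$. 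In particular, row $i$ of $L_j$ has a nonzero entry in some column, call it $k$.

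Let $w$ be the $k$th column of $L_j$, so $C_j w \equiv \vec 0 \pmod{p}$. By Lemma \ref{lemma:tac on a zero} (applied after the same cyclic relabeling), inserting a $0$ at position $j$ of $w$ produces a coloring $Y$ of $\P$. By construction $Y_j = 0$ while $Y_i = (L_j)_{ik} \not\equiv 0 \pmod{p}$, so $Y$ is a nontrivial coloring that distinguishes $a_i$ from $a_j$. Since the pair was arbitrary, Lemma \ref{lemma:props of fundamental colorings} gives that every nontrivial coloring of $\P$ distinguishes every pair of arcs, and hence every nontrivial coloring is heterogeneous.

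The main obstacle is justifying that Proposition \ref{prop:L has no triv. columns} and Lemma \ref{lemma:tac on a zero} remain valid after cyclically relabeling arcs so that any chosen arc becomes the last, and that Proposition \ref{prop:L has no triv. columns} may be applied to the mirror $\P^m$ to extract the row-nonzero conclusion for $L_j$. Both reduce to inspecting those proofs and observing that they use only the assumption that $\P$ (or $\P^m$) is a reduced alternating diagram of a knot of prime determinant; a cyclic relabeling of arcs and crossings leaves those hypotheses intact, and $\P^m$ is itself such a diagram (for the mirror knot $k^m$).
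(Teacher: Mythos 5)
Your argument is correct, and it rests on the same pillars as the paper's own proof --- Proposition~\ref{prop:L has no triv. columns} applied both to $\P$ and to its mirror via Lemma~\ref{mirroring lemma}, the extension Lemma~\ref{lemma:tac on a zero}, and the fundamental-coloring machinery of Lemma~\ref{lemma:props of fundamental colorings} and Proposition~\ref{prop:exists a fundamental coloring} --- but you assemble them differently. The paper fixes the distinguished minor $C$, applies Proposition~\ref{prop:L has no triv. columns} to $\P^m$ to locate one nonzero entry $(L^\intercal)_{i1}$, and then uses the fundamental-coloring property of $\P^m$ to promote this to the statement that the entire $i$th column of $L$ is nonzero mod $p$; extending that column by $0$ separates $a_n$ from every other arc, and shifting the oriented labeling (so any arc can play the role of $a_n$) plus one more appeal to Lemma~\ref{lemma:props of fundamental colorings} gives heterogeneity. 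You instead range over all pairs $(i,j)$ and over the minors $C_j$: the mirror argument hands you just one nonzero entry in the row of $L_j$ corresponding to $a_i$, which already yields a nontrivial coloring with $Y_j=0$ and $Y_i\not\equiv 0$, and you invoke the fundamental-coloring propagation only once, on $\P$ itself, at the end. Your route avoids ever needing an entirely nonzero column of $L$ and avoids running the fundamental-coloring argument on the mirror diagram; the price is that you must justify Proposition~\ref{prop:L has no triv. columns} and Lemma~\ref{lemma:tac on a zero} for an arbitrary choice of deleted row and column, a point you correctly flag and which is harmless: the pseudo-coloring argument of Section~\ref{sec:main result on L} never uses which crossings carry the $+1$ and $-1$ defects, and the paper needs the same relabeling freedom when it shifts the oriented labeling. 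One small bookkeeping remark: cite Proposition~\ref{prop:exists a fundamental coloring} alongside Lemma~\ref{lemma:props of fundamental colorings}, since the lemma's hypothesis (one fundamental coloring) is exactly what that proposition supplies.
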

\begin{proof}
Let $k$ be an $n$-crossing alternating knot with prime determinant $p$. Construct an oriented labeling for $\P$
and the crossing matrix $C'$. Then, by lemma \ref{mirroring lemma},
$C'^\intercal$ is a crossing matrix for $\P^m$ and the minor crossing 
matrix for $\P^m$ is likewise $C^\intercal$. (Note that $\D^m$ is also
a reduced diagram of an alternating knot of determinant $p$.)
Let $L^m$ denote the coloring matrix of $\P^m$; elementary linear algebra shows $L^m = L^\intercal$.
Now, proposition \ref{prop:L
has no triv. columns} demonstrates that all the columns of
$L^\intercal$ have non-trivial entries mod $p$. So, for
example, in the first column of $L^\intercal$ there is a nonzero
entry: $(L^\intercal)_{i1} \not\equiv 0$.  That is, using lemma~\ref{lemma:tac on a zero},
the first column extends
to a coloring that distinguishes $a_i$ and $a_n$ in $\P^m$.

Then, by lemma~\ref{lemma:props of fundamental colorings} and proposition~\ref{prop:exists a fundamental coloring},
all nontrivial colorings distinguish $a_i$ and $a_n$ in $\P^m$. Hence
$(L^\intercal)_{ij} \not\equiv 0$ for $j = 1,\dotsc,n-1$. That is, the
$i$th row of $L^\intercal$, or equivalently, the $i$th column of
$L$ consists only of nonzero entries modulo $p$.  Therefore, the $i$th column
of $L$ extends to a coloring differentiating $a_n$ from all other
arcs in $\P$:
\begin{equation}\label{eq:coloring a_n ne a_i,all i}
X' = \left(
       \begin{array}{c}
         (L)_{i1} \\
         \vdots \\
         (L)_{i(n-1)} \\
         0 \\
       \end{array}
     \right)
\end{equation}
Now, since an oriented labeling can start at any arc, the arc
labeled with $a_n$ is arbitrary.  Hence, by repeating the above
argument while shifting the oriented labeling, we can exhibit a
coloring differentiating any arc from all other arcs.  But, by proposition~\ref{prop:exists a fundamental coloring}, $k$
has one fundamental coloring, so, by lemma~\ref{lemma:props of fundamental colorings}, every nontrivial coloring differentiates the
same arcs.  Therefore the coloring given in \eqref{eq:coloring
a_n ne a_i,all i} must be heterogeneous.
\end{proof}

Together, corollary \ref{cor:nullity corollary} and proposition
\ref{prop:existence} prove the conjecture.

\section*{Acknowledgement}

We thank the referee for many helpful comments that greatly improved
the exposition.


\begin{thebibliography}{99}  %% Change to 99 if you have 10 to 99 entries

\bibitem{Adams} C.\ Adams, ``The Knot Book,'' American Mathematical Society, 2004.

\bibitem{Asaeda} M.M.\ Asaeda, J.H.\ Przytycki, and A.S.\ Sikora, {\it Kauffman--Harary Conjecture
    Holds for Montesinos Knots,} J.\ Knot Theory Ramifications {\bf 13}
    (2004),  467--477. math.GT/0305415

%\bibitem{Brownell} K.\ Brownell, K.\ O'Neil, and  L.\ Taalman, {\it Fundamentally Different
%    m-Colorings of Pretzel Knots,} Missouri J.\ Math.\ Sci., to appear. Available at
%    http://www.math.jmu.edu/~taal/OJUPKT/OJUPKT.html

\bibitem{BZ} G.\ Burde and H.\   Zieschang, ``Knots,''  Walter de Gruyter \& Co., Berlin,  2003.
    
%\bibitem{C} R.\ Crowell, {\it Genus of alternating link types,} 
%   Ann.\ of Math.\ {\bf 69} (1959) 258--275. 
    
\bibitem{DMMS} N.E.\ Dowdall, T.W.\ Mattman, K.\ Meek, and P.R.\ Solis, {\it  
On the Harary--Kauffman Conjecture and Turk's Head Knots,} Kobe J.\ Math., to appear. 	arXiv:0811.0044 

\bibitem{Kauff} F.\ Harary and L.H.\ Kauffman, {\it Knots and Graphs I - Arc Graphs and
    Colorings,} Adv.\ in Appl.\ Math.\  {\bf 22} (1999), 312--337.
    
\bibitem{Kauff2} L.H.\ Kauffman, {\it State models and the Jones polynomial,}
 Topology  {\bf 26}  (1987), 395--407.
 
\bibitem{KL}  L.H.\ Kauffman and S.\ Lambropoulou, {\it On the classification of rational tangles,}
    Adv.\ in Appl.\ Math.\  {\bf 33}  (2004),  199--237. 	math.GT/0311499

\bibitem{livingston} Livingston, C., ``Knot Theory,'' The Mathematical Association of
America, 1996.

\bibitem{Mo} M.K.\ Mo, {\it Knot Colorings and the Alexander
Polynomial,} (Unpublished Lecture Notes) (2007).
\url{www.math.washington.edu/~mkmo/research/knot\%20colorings.pdf}

%\bibitem{MuraI} K.\  Murasugi, {\it On the genus of the alternating knot I,}
%J.\ Math.\ Soc.\ Japan {\bf 10} (1958) 94--105. 

%\bibitem{MuraII} K.\  Murasugi, {\it On the genus of the alternating knot II,}
%J.\ Math.\ Soc.\ Japan {\bf 10} (1958) 235--248. 

\bibitem{Mura2} K.\ Murasugi, {\it Jones polynomials and classical conjectures in knot theory,}
 Topology  {\bf 26}  (1987),  187--194.

\bibitem{Murasugi}  K.\ Murasugi, ``Knot Theory and its
Applications,'' Birk\"auser Boston, Inc., Boston, MA, 1996.
 
\bibitem{PDDGS} L.\ Person, M.\ Dunne, J.\ DeNinno, B.\ Guntel, and L.\ Smith, {\it Colorings of
rational, alternating knots and links,} preprint (2002).

\bibitem{Przyt} J.H.\ Przytycki, {\it $3$-coloring and other elementary invariants of knots,}  Knot theory (Warsaw, 1995),  275--295, Banach Center Publ., {\bf 42}, Polish Acad. Sci., Warsaw, (1998). math.GT/0608172

\bibitem{Thistle} M.B.\ Thistlethwaite, {\it A spanning tree expansion of the Jones polynomial,}
 Topology  {\bf 26}  (1987), 297--309.
 
\bibitem{West} D.B.\ West, ``Introduction to graph theory," Prentice Hall, 1996. 
				
\end{thebibliography}
\end{document}